\documentclass[]{amsart}
\usepackage{hyphenat}
\usepackage[T1]{fontenc}
\usepackage{amsmath,amssymb,mathtools,amsthm}
\usepackage{microtype}
\usepackage{orcidlink}

\newtheorem{thm}{Theorem}[section]

\newtheorem*{fixed point criterion}{\fixed point criterion}
\newtheorem{cor}[thm]{Corollary}
\newtheorem{lem}[thm]{Lemma}
\newtheorem{prop}[thm]{Proposition}

\newtheorem{conj}[thm]{Conjecture}
\theoremstyle{definition}
\newtheorem{defn}[thm]{Definition}

\newtheorem{ques}[thm]{Question}

\theoremstyle{remark}

\numberwithin{equation}{section}

\newcommand{\BS}{\operatorname{BS}}
\newcommand{\Area}{\operatorname{Area}}
\newcommand{\id}{\operatorname{id}}
\newcommand{\ncl}[1]{\left\langle\!\left\langle #1\right\rangle\!\right\rangle}

\title[One-relator groups with a commutator relator]
{Commutator relators of one-relator groups do not force Hopficity, residual finiteness, or automaticity}

\author[Ke Wang]{Ke Wang\orcidlink{0009-0005-7108-3725}}
\address{School of Mathematics and Statistics, Xi'an Jiaotong University, Xi'an 710049, P. R. China}
\email{keqiyehuopo@stu.xjtu.edu.cn}
	
\author[Qiang Zhang]{Qiang Zhang\orcidlink{0000-0001-6332-5476}}
\address{School of Mathematics and Statistics, Xi'an Jiaotong University, Xi'an 710049, P. R. China}
\email{zhangq.math@mail.xjtu.edu.cn}

\subjclass[2020]{20F05, 20E06, 20F10, 20F65}
\keywords{One-relator group, commutator relator, Hopfian group, residual finiteness, automatic group, Baumslag--Solitar group, Dehn function}

\date{\today}

\thanks{The authors are partially supported by NSFC (No. 12471066).}

\begin{document}

\begin{abstract}
Let $G=F/\ncl{r}$ be a one-relator group with the relator $r\in [F,F]$ or $r=[u,v] ~(u,v\in F)$, where $F$ is a finitely generated free group. Baumslag asked whether $G$ is Hopfian, residually finite or automatic. In the case of $r\in[F,F]$, a negative answer to the residual finiteness and automaticity has already been obtained by a result of Olshanskii. In this note, we construct a family of one-relator groups
$$ G_m=\left\langle a,t\ \middle|\ [t,a[a,t]^{-m}]\right\rangle,$$
 whose relators are commutators, each of which has a Baumslag-Solitar subgroup as a retract. These groups provide negative answers to these three questions in both cases. 
\end{abstract}

\maketitle

\section{Introduction}

One-relator groups have been one of central subjects in combinatorial and geometric group theory since the pioneering work of Magnus. His Freiheitssatz and solution of the word problem established the basic inductive methods of the theory \cite{Ma30, Ma32}. For one-relator groups with torsion, Newman \cite{Ne68} later proved his influential spelling theorem, which yields strong control over their normal forms and implies their hyperbolicity. Building on the development of geometric group theory, Wise \cite{Wi21} proved that one-relator groups with torsion are virtually special and hence residually finite and linear, resolving a longstanding conjecture of Baumslag. Further structural progress was made by Louder and Wilton, who proved that one-relator groups with torsion are coherent \cite{LW20} and subsequently developed the theory of negative immersions to establish coherence and related subgroup properties for a broad class of torsion-free one-relator groups \cite{LW24}. Most recently, Jaikin-Zapirain and Linton \cite{JL25} proved that every one-relator group is coherent, thereby settling the general coherence conjecture. Their work also gives homological coherence results for group algebras and extends to wider classes of groups of cohomological dimension two. Together, these results illustrate how classical combinatorial arguments, geometric methods, and modern homological techniques have progressively deepened the understanding of one-relator groups.

In this note, we focus on one-relator groups whose relator is a commutator, and provide a solution to the following open questions of Baumslag.
\begin{ques}\cite[Questions OR7, OR8]{BMS98}\label{ques. OR8}
Let $G=F/\ncl{r}$ be a one-relator group, where $F$ is a finitely generated free group and $r\in [F, F]$ or $r=[u, v]$ for some $u,v\in F$.
\begin{enumerate}
    \item[(a)] Is $G$ Hopfian?
    \item[(b)] Is $G$ residually finite?
    \item[(c)] Is $G$ automatic?
\end{enumerate}
\end{ques}

For items (b) and (c) for the case of $r\in[F,F]$, Baumslag said that there exists such one-relator groups that are neither residually finite nor automatic, which can be obtained from a result of Olshanskii \cite{Ol95}. To our knowledge, there are no known counterexamples or affirmative proofs for item (a) for the case of $r\in[F,F]$ or items (a), (b) and (c) for the case of $r=[u,v]$. Thus, the purpose of this note is to provide such counterexamples to these questions.

Recall that a group is \emph{Hopfian} if its every surjective endomorphism is injective, and it is \emph{residually finite} if the intersection of all its normal subgroups of finite index is trivial. The following definition of \emph{automatic group} is from \cite[Definition 2.3.1]{Ep92}. Refer to \cite[Chapter 2]{Ep92} for more details.
\begin{defn}[Automatic group]\label{defn: automatic group}
Let $G$ be a group. An \emph{automatic structure} on $G$ consists of a set $A$ of semigroup generators of $G$, a finite state automaton $W$ over $A$, and finite state automata $M_x$ over $(A,A)$, for $x\in A\cup\{\epsilon\}$, satisfying the following conditions:
\begin{enumerate}
    \item The map $\pi:L(W) \to G$ is surjective.
    \item For $x\in A\cup\{\epsilon\}$, we have $(w_1,w_2)\in L(M_x)$ if and only if $\overline{w_1x}=\overline{w_2}$ and both $w_1$ and $w_2$ are elements of $L(W)$.
\end{enumerate}
The symbol $\epsilon$ represents the null-string and $L(W)$ is the language recognized by $W$. We call $W$ the \emph{word acceptor}, $M_\epsilon$ the \emph{equality recognizer}, and each $M_x$, for $x \in A$, a \emph{multiplier automaton} for the automatic structure. An automatic structure is sometimes called an \emph{automation}. An \emph{automatic group} is one that admits an automatic structure.
\end{defn}

\noindent\textbf{Notations.} Throughout this paper, we use the commutator convention:
    $$ [x,y]:=x^{-1}y^{-1}xy,$$ 
    and denote the Baumslag-Solitar group as follows:
    $$\BS(p,q):= \left\langle c,t\ \middle|\ t^{-1}c^pt=c^q\right\rangle,\quad p,q\in\mathbb Z\backslash\{0\}.$$

Our main result is as follows.
\begin{thm}\label{thm:main}
For any integer $m\geq 1$, let
$$
 G_m=\left\langle a,t\ \middle|\ [t,a[a,t]^{-m}]\right\rangle .
$$
Then the following hold:
\begin{enumerate}
\item The group $G_m$ contains $\BS(m,m+1)$ as a retract.
\item The group $G_2$ is not Hopfian, and hence it is not residually finite.
\item The group $G_1$ is not automatic.
\end{enumerate}
\end{thm}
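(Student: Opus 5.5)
The plan is to find a Tietze transformation of the presentation of $G_m$ that exposes $\BS(m,m+1)$ directly, and then derive (2) and (3) from known properties of Baumslag--Solitar groups.

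\textbf{Step 1: a new presentation.} I will introduce the new generators $c:=[a,t]=a^{-1}t^{-1}at$ and $b:=a c^{-m}$. Then $a=bc^m$, and the defining relator becomes $[t,b]$. The definition of $c$ becomes the relation $c=[bc^m,t]=c^{-m}b^{-1}t^{-1}bc^m t$. Modulo $[t,b]=1$ we have $b^{-1}t^{-1}b=t^{-1}$, so this relation is equivalent to $c=c^{-m}t^{-1}c^mt$, that is, $t^{-1}c^mt=c^{m+1}$. Eliminating $a$ therefore gives
$$G_m\cong\left\langle b,c,t\ \middle|\ [t,b],\ t^{-1}c^mt=c^{m+1}\right\rangle .$$
This is the amalgamated product $\BS(m,m+1)*_{\langle t\rangle}\langle t,b\rangle$, where $\langle t,b\rangle\cong\mathbb Z^2$ and $t$ has infinite order on both sides. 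The only care needed here is checking the Tietze moves in both directions.

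\textbf{Step 2: part (1).} The assignment $b\mapsto 1$, $c\mapsto c$, $t\mapsto t$ kills both relators' images appropriately, so it defines a homomorphism $\rho\colon G_m\to\BS(m,m+1)$. The inclusion $\iota\colon\BS(m,m+1)\to G_m$ is well defined because the Baumslag--Solitar relation holds in $G_m$. Since $\rho\circ\iota=\id$ on generators, $\iota$ is injective and $\iota\circ\rho$ is a retraction of $G_m$ onto $\BS(m,m+1)$. Injectivity of $\iota$ also follows from the amalgam structure.

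\textbf{Step 3: part (2).} I will extend the classical non-Hopfian endomorphism of $\BS(2,3)$ to $G_2$ by setting $\phi(b)=b$, $\phi(t)=t$, $\phi(c)=c^2$.
\begin{itemize}
\item \emph{Well defined:} $[t,b]$ is fixed, and $t^{-1}c^4t=(c^3)^2=c^6$ holds in $G_2$.
\item \emph{Surjective:} the image contains $b$, $t$, $c^2$ and $t^{-1}c^2t=c^3$, hence it contains $c=c^3c^{-2}$.
\item \emph{Not injective:} the element $g=[t^{-1}ct,c]$ is nontrivial in $\BS(2,3)$ by Britton's lemma, so it is nontrivial in $G_2$ by Step 2. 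But $\phi(g)=[t^{-1}c^2t,c^2]=[c^3,c^2]=1$.
\end{itemize}
Thus $G_2$ is not Hopfian. Since $G_2$ is finitely generated, Mal'cev's theorem (finitely generated residually finite groups are Hopfian) shows that $G_2$ is not residually finite.

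\textbf{Step 4: part (3).} The group $\BS(1,2)$ has exponential Dehn function. Automatic groups are finitely presented with at most quadratic Dehn function \cite{Ep92}. I will use the standard fact that a retract $H$ of a finitely presented group $G$ is finitely presented and satisfies $\delta_H\preceq\delta_G$. The reason is that a van Kampen diagram for a word in $H$ over $G$ can be pushed through the retraction. Concretely, map each relator of $G$ to a product of conjugates of relators of $H$, with conjugators of bounded length, and take $\rho$-images; this converts a filling over $G$ into a filling over $H$ with area inflated only by a constant factor. This gives $\delta_{G_1}\succeq 2^n$, so $G_1$ is not automatic.

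The main obstacle is Step 4: making the retract–Dehn function comparison precise, including the fact that finite presentability passes to retracts. I would rely on the standard literature statement rather than reprove it.
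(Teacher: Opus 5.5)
Your proof is correct and is essentially the paper's argument. Under your substitution $b=ac^{-m}$, your retraction $b\mapsto 1$ is the paper's $\rho_m(a)=c^m$, your endomorphism $\phi(b)=b$, $\phi(c)=c^2$ is the paper's $\Phi(a)=ac^2$, $\Phi(c)=c^2$, the witness $[t^{-1}ct,c]$ is the same, and Step 4 is the paper's combination of the quadratic bound for automatic groups, the Baumslag--Miller--Short retract inequality, and the super-polynomial Dehn function of $\BS(1,2)$. The only difference is cosmetic: your presentation $\langle b,c,t\mid [t,b],\ t^{-1}c^mt=c^{m+1}\rangle$ exhibits $G_m\cong\BS(m,m+1)*_{\langle t\rangle}\mathbb Z^2$, which makes the well-definedness checks nearly immediate and gives a second reason why $\BS(m,m+1)$ embeds.
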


As a direct corollary, we have:

\begin{cor}\label{cor:questions}
For a one-relator group $F/\ncl{r}$, neither of the assumptions
$$ r=[u,v] \qquad\text{or}\qquad r\in[F,F]$$
implies Hopficity, residual finiteness, or automaticity.
\end{cor}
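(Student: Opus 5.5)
The corollary follows directly from Theorem~\ref{thm:main}: we exhibit one-relator groups whose relators satisfy both hypotheses at once and which fail each of the three properties. Let $F=F(a,t)$ be the free group on $a,t$, and for $m\geq 1$ let
$$ r_m=[t,a[a,t]^{-m}]\in F,$$
so that $G_m=F/\ncl{r_m}$.

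First, $r_m$ has the form $[u,v]$ with $u=t$ and $v=a[a,t]^{-m}$, both in $F$. Since every commutator $[u,v]=u^{-1}v^{-1}uv$ lies in the derived subgroup, we also have $r_m\in[F,F]$. Thus each $G_m$ satisfies both hypotheses of Question~\ref{ques. OR8}, and a single family gives counterexamples in both cases simultaneously.

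Next, take $m=2$. By part (2) of Theorem~\ref{thm:main}, $G_2$ is not Hopfian, which answers item (a) negatively. The same part states that $G_2$ is not residually finite. This is the standard consequence of Mal'cev's theorem: a finitely generated residually finite group is Hopfian, and $G_2$ is finitely generated. This answers item (b) negatively.

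Finally, take $m=1$. By part (3) of Theorem~\ref{thm:main}, $G_1$ is not automatic, which answers item (c) negatively.

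There is no real obstacle at the level of the corollary. All the substance lies in Theorem~\ref{thm:main}, namely the retraction onto $\BS(m,m+1)$, the non-Hopficity of $G_2$, and the non-automaticity of $G_1$, which we take as given. The only point to check is that the displayed relator is literally a single commutator of two words in $F$, and this is immediate from the notation $[x,y]=x^{-1}y^{-1}xy$ applied with $x=t$ and $y=a[a,t]^{-m}$.
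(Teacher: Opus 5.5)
Your proposal is correct and matches the paper's argument. The corollary is read off from Theorem~\ref{thm:main} by observing that $r_m=[t,a[a,t]^{-m}]$ is literally a single commutator and hence lies in $[F,F]$. Then $G_2$ witnesses the failure of Hopficity and, via Mal'cev, of residual finiteness, while $G_1$ witnesses the failure of automaticity, for both hypotheses at once.
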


In Sect. \ref{sect. Gm}, we show that the one-relator group
$G_m$ each has a Baumslag-Solitar subgroup as a retract. In Sect. \ref{sect. G2}, we prove the non-Hopficity of $G_2$ by constructing an epimorphism of $G_2$, which induces a non-monomorphic epimorphism on its Baumslag-Solitar subgroup. Since every finitely generated residually finite group is Hopfian (a well known result of Mal'cev \cite{Ma40}), we can deduce that $G_2$ is not residually finite from its non-Hopficity. In Sect. \ref{sect. G1}, we prove the non-automaticity of $G_1$, by showing $G_1$ does not satisfy the quadratic isoperimetric inequality. Finally in Sect. \ref{sect. proof and question}, we complete the proof of our main results and pose some further questions.

\section{The group $G_m$ and its Baumslag-Solitar retracts}\label{sect. Gm}
For any integer $m\geq 1$, let 
\begin{equation}\label{eq:Gm}
G_m:=\left\langle a,t\ \middle|\ [t,a[a,t]^{-m}] \right\rangle.
\end{equation}
Denote $$c:=[a,t]=a^{-1}t^{-1}at.$$ Then $a^{-1}ta=tc^{-1},$ and the single relation $$[t,a[a,t]^{-m}]=1$$ is equivalent to
$$ t^{-1}[a, t]^ma^{-1}ta[a, t]^{-m}=t^{-1}c^mtc^{-(m+1)}=1,$$
namely, $t^{-1}c^mt=c^{m+1}.$ Now adding $c$ as a generator to Eq. (\ref{eq:Gm}), we obtain a new presentation of $G_m$ as follows:
\begin{equation}\label{eq:Gm-expanded}
 G_m\cong \left\langle a,c,t\ \middle|\ t^{-1}at=ac,\quad t^{-1}c^mt=c^{m+1} \right\rangle .
\end{equation}

\begin{prop}\label{prop:retract}
For every $m\geq 1$, the subgroup $\langle c,t\rangle$ of $G_m$ is naturally isomorphic to $\BS(m,m+1)$ and is a retract of $G_m$.
\end{prop}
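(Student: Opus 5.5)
The plan is to build a retraction by hand and then use it to identify the subgroup. First define a homomorphism $\iota\colon \BS(m,m+1)\to G_m$ by $c\mapsto c$, $t\mapsto t$. This is well defined because the relation $t^{-1}c^mt=c^{m+1}$ holds in $G_m$ by the presentation (\ref{eq:Gm-expanded}). Its image is exactly the subgroup $\langle c,t\rangle$.

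Next define $\rho\colon G_m\to \BS(m,m+1)$ on the generators of (\ref{eq:Gm-expanded}) by $a\mapsto 1$, $c\mapsto c$, $t\mapsto t$. We must check that both relators are sent to the identity.
\begin{itemize}
\item The relation $t^{-1}at=ac$ becomes $t^{-1}\cdot 1\cdot t=1\cdot c$, that is, $c=1$.
\end{itemize}
This fails, so this naive choice does not work. We must instead send $a$ to an element $x$ with $t^{-1}xt=xc$, i.e.\ $c=x^{-1}t^{-1}xt=[x,t]$.

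So the key step is to find $x\in\BS(m,m+1)$ with $[x,t]=c$. Try $x=c^k$, which needs $c^{-k}t^{-1}c^kt=c$, i.e.\ $t^{-1}c^kt=c^{k+1}$. The defining relation gives exactly this with $k=m$. So we set
\[
\rho(a)=c^m,\qquad \rho(c)=c,\qquad \rho(t)=t .
\]
Under $\rho$:
\begin{itemize}
\item the relation $t^{-1}at=ac$ becomes $t^{-1}c^mt=c^{m+1}$, which holds in $\BS(m,m+1)$;
\item the relation $t^{-1}c^mt=c^{m+1}$ is mapped to itself.
\end{itemize}
Hence $\rho$ is a well-defined homomorphism. (Equivalently, in the one-relator presentation (\ref{eq:Gm}) we send $a\mapsto c^m$, $t\mapsto t$. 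Then $[a,t]\mapsto c$ and the relator maps to $t^{-1}c^mtc^{-(m+1)}=1$, consistent with the computation preceding (\ref{eq:Gm-expanded}).)

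Now $\rho\circ\iota$ fixes both generators $c$ and $t$, so $\rho\circ\iota=\id_{\BS(m,m+1)}$. This gives two conclusions.
\begin{itemize}
\item $\iota$ is injective, so $\langle c,t\rangle\cong\BS(m,m+1)$ via the natural map.
\item $\iota\circ\rho\colon G_m\to G_m$ is an idempotent endomorphism with image $\langle c,t\rangle$ that restricts to the identity on $\langle c,t\rangle$. It is therefore a retraction onto $\langle c,t\rangle$.
\end{itemize}

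The only genuine obstacle is choosing $\rho(a)$. Once the relation is read as ``$a$ conjugates like $c^m$,'' the choice $a\mapsto c^m$ is forced, and every remaining verification is routine. It would also be worth noting that no Magnus or Freiheitssatz argument is needed, because the retraction gives injectivity directly.
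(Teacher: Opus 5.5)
Your proposal is correct and is essentially the paper's proof. Like the paper, you define $\iota$ and the retraction $\rho$ with $\rho(a)=c^m$, $\rho(c)=c$, $\rho(t)=t$, check both relators of the expanded presentation, and conclude from $\rho\circ\iota=\id$ that $\iota$ is injective and that $\langle c,t\rangle$ is a retract (the aborted attempt $a\mapsto 1$ can simply be dropped from a final write-up).
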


\begin{proof}
Denote
$$ B_m:=\BS(m,m+1) =\left\langle c,t\ \middle|\ t^{-1}c^mt=c^{m+1}\right\rangle .$$
The presentation \eqref{eq:Gm-expanded} gives a homomorphism
$$ \iota_m:B_m\longrightarrow G_m, \qquad \iota_m(c)=c, \quad \iota_m(t)=t.$$
Furthermore, define a map on the generators of $G_m$ by
$$ \rho_m(a)=c^m, \qquad \rho_m(c)=c, \qquad \rho_m(t)=t.$$
Then the first relation in \eqref{eq:Gm-expanded} maps to
$$ t^{-1}c^mt=c^mc=c^{m+1}, $$
and the second maps to the defining relation of $B_m$. Hence $\rho_m$ induces a homomorphism $$ \rho_m:G_m\longrightarrow B_m.$$
Since $\rho_m\circ\iota_m=\id_{B_m}$, the map $\iota_m$ is injective and $B_m$ is a retract of $G_m$.
\end{proof}

We shall use the following Britton's lemma in the cyclic case; see, for example, \cite{Br63} or \cite[Page 181]{LS77} for the original statement. A brief corridor proof is included for completeness.

\begin{lem}[Britton's lemma in the cyclic case]\label{lem:britton}
Let $$ \BS(p,q)=\langle c,t\mid t^{-1}c^pt=c^q\rangle, \qquad p,q\neq 0.$$
Suppose that a word $w$ contains an occurrence of $t^{\pm1}$ and has no subword of either form
$$ t^{-1}c^{kp}t \qquad\text{or}\qquad tc^{kq}t^{-1}, \qquad k\in\mathbb Z.$$
Then $w$ does not represent the identity in $\BS(p,q)$.
\end{lem}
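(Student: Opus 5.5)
We argue by contradiction with a corridor argument on a minimal van Kampen diagram. Suppose $w$ contains some $t^{\pm1}$, contains no pinch $t^{-1}c^{kp}t$ or $tc^{kq}t^{-1}$, and still represents the identity in $\BS(p,q)$. Then there is a reduced van Kampen diagram $\Delta$ over the presentation $\langle c,t\mid t^{-1}c^pt=c^q\rangle$ with boundary label $w$. Since each relator has exactly two $t$-letters, one $t$ and one $t^{-1}$, the $t$-edges of $\Delta$ organize into \emph{$t$-corridors}. A corridor is a chain of relator cells glued along their $t$-edges. Each chain either starts and ends on $\partial\Delta$ or closes into an annulus.

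The first step is to rule out annuli. If $\Delta$ had a $t$-annulus, take an innermost one. Its inner boundary reads a power $c^{np}$ or $c^{nq}$. The subdiagram it encloses contains no $t$-edges, so it is a diagram over the free group on $c$. That power is therefore trivial, giving $n=0$. But the corridor cells all have the same orientation, because adjacent cells share a $t$-edge with consistent orientation, so the boundary side is $c^{\pm Np}$ where $N\ge 1$ is the number of cells. Otherwise the corridor has adjacent cancelling cells, which contradicts reducedness. So no annuli exist. Alternatively, we excise the annulus together with its interior and glue the boundaries, which reduces the area; this contradicts minimality.

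Next, every $t$-edge lies on a corridor joining two boundary $t$-letters, so $w$ contains at least one corridor. Among all corridors, choose one that is outermost, meaning that one of the two boundary arcs it cuts off from $\partial\Delta$ contains no further $t$-letters. Such a corridor exists because corridors do not cross. That arc is then a subword $c^{j}$ of $w$, and it is flanked by the two ends of the corridor. Two letters joined by a corridor have opposite exponents when read along $\partial\Delta$. The side of the corridor facing the arc reads $c^{\pm Np}$ or $c^{\pm Nq}$, according to its orientation. The region between the side and the arc has no $t$-edges, so its boundary word in $c$ is trivial in $\mathbb Z$. Hence $j=\pm Np$ or $j=\pm Nq$.

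Tracking orientation, the subword has the form $t^{-1}c^{kp}t$ when the arc lies on the $c^p$-side, and $tc^{kq}t^{-1}$ when it lies on the $c^q$-side. Either way this is exactly a forbidden pinch, which is the desired contradiction.

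The main obstacle is the orientation bookkeeping in this last step. We must check that the side of the corridor facing the arc reads a power of $c^p$ precisely when the arc is bracketed as $t^{-1}\cdots t$, given the relator $t^{-1}c^pt c^{-q}$. We settle this by reading one relator cell: in the cell, the $c^p$ side sits between an incoming $t$-edge labelled $t^{-1}$ and an outgoing edge labelled $t$. We also handle the degenerate case where $w$ is not cyclically reduced, or where the arc wraps around the basepoint. For that case we note that a cyclic conjugate of $w$ also represents $1$, so a pinch in the cyclic word either appears in $w$ itself or straddles the basepoint. The straddling case is easily excluded, because the pinch would then have to use the first and last $t$-letters of $w$, and we can choose the outermost corridor on the arc avoiding the basepoint.
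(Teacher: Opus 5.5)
Your proposal is correct and follows essentially the same corridor argument as the paper: rule out $t$-annuli in a reduced diagram, take an outermost corridor, and read the forbidden pinch off the corridor side facing the $t$-free boundary arc. Your extra step, choosing the corridor so that its $t$-free arc does not pass through the basepoint, fills in a detail the paper leaves implicit.
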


\begin{proof}
Assume that $w=1$ and choose a reduced van Kampen diagram for $w$ over the displayed presentation. The $t$-edges of the two-cells assemble into $t$-corridors. The usual innermost-annulus reduction shows that a reduced diagram contains no annular $t$-corridor. Hence every
$t$-corridor joins two boundary $t$-edges.

Choose an outermost corridor. One of the two boundary arcs cut off by it contains no other $t$-edge and is therefore labelled by a power $c^\ell$. According to the orientations of the terminal $t$-edges, the corresponding boundary subword is either
$$ t^{-1}c^\ell t \qquad\text{or}\qquad tc^\ell t^{-1}.$$
The corridor is tiled by copies of the defining two-cell. In the first case this forces $\ell\in p\mathbb Z$, and in the second it forces $\ell\in q\mathbb Z$. Thus $w$ contains a prohibited pinch, a contradiction.
\end{proof}

\section{\texorpdfstring{Non-Hopficity and non-residual finiteness of $G_2$}{Proof of non-Hopficity of $G_2$}}\label{sect. G2}

Now for $m=2$, the presentation \eqref{eq:Gm-expanded} becomes
\begin{equation}\label{eq:G2}
 G_2= \left\langle a,c,t\ \middle|\ t^{-1}at=ac,\quad t^{-1}c^2t=c^3 \right\rangle .
\end{equation}
Then we have:
\begin{prop}\label{prop:nonhopfian}
The group $G_2$ is not Hopfian, and hence not residually finite.
\end{prop}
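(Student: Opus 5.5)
The plan is to lift the classical non-Hopfian endomorphism of $\BS(2,3)$, namely $t\mapsto t$, $c\mapsto c^2$, to an endomorphism $\varphi$ of $G_2$, working with the presentation \eqref{eq:G2}. I would take
$$\varphi(t)=t,\qquad \varphi(c)=c^2,\qquad \varphi(a)=ac^k,$$
with the exponent $k$ chosen so that both relations are preserved.

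First I check that $\varphi$ is well defined.

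The second relation maps to $t^{-1}c^4t=c^6$. This holds in $G_2$, since $t^{-1}c^4t=(t^{-1}c^2t)^2=c^6$.

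For the first relation, take $k=2j$ even. Using $t^{-1}at=ac$ and $t^{-1}c^{2j}t=c^{3j}$, we get
$$t^{-1}ac^{2j}t=ac\,c^{3j}.$$
The required image of the right-hand side is $\varphi(a)\varphi(c)=ac^{2j+2}$. So we need $1+3j=2j+2$, which gives $j=1$. Hence I set $\varphi(a)=ac^2$, and indeed $t^{-1}ac^2t=ac^4=\varphi(a)\varphi(c)$.

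Second, I show that $\varphi$ is surjective.
\begin{itemize}
\item The generator $t$ lies in the image.
\item We have $c=c^3c^{-2}=t^{-1}c^2t\,c^{-2}=\varphi(t^{-1}ct\,c^{-1})$, so $c$ lies in the image.
\item Finally $a=\varphi(a)c^{-2}$, so $a$ lies in the image.
\end{itemize}

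Third, I show that $\varphi$ is not injective, which I expect to be the main point needing care. Take
$$w=[t^{-1}ct,\,c]=t^{-1}c^{-1}t\,c^{-1}\,t^{-1}ct\,c.$$
Its image is $\varphi(w)=[t^{-1}c^2t,c^2]=[c^3,c^2]=1$.

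To see that $w\neq1$, note that $w$ lies in $\langle c,t\rangle$, which is isomorphic to $\BS(2,3)$ by Proposition~\ref{prop:retract}. So it suffices to show $w\neq 1$ in $\BS(2,3)$, and I apply Lemma~\ref{lem:britton} with $p=2$, $q=3$. The possible pinches in $w$ are
\begin{itemize}
\item $t^{-1}c^{-1}t$, whose exponent $-1$ is not divisible by $2$;
\item $tc^{-1}t^{-1}$, whose exponent $-1$ is not divisible by $3$;
\item $t^{-1}ct$, whose exponent $1$ is not divisible by $2$.
\end{itemize}
Hence $w$ contains no prohibited subword, so $w\ne1$. Therefore $\varphi$ is a non-injective epimorphism, and $G_2$ is not Hopfian.

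Since $G_2$ is finitely generated, Mal'cev's theorem \cite{Ma40} (a finitely generated residually finite group is Hopfian) shows that $G_2$ is not residually finite.
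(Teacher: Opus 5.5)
Your proposal is correct and follows the paper's proof essentially step for step. You use the same endomorphism $t\mapsto t$, $c\mapsto c^2$, $a\mapsto ac^2$, the same surjectivity check, the same kernel element $[t^{-1}ct,c]$ shown nontrivial via Britton's lemma in $\BS(2,3)$, and Mal'cev's theorem; your only addition is motivating $\varphi(a)=ac^2$ by solving $1+3j=2j+2$.
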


\begin{proof}
Define a map on the generators of $G_2$ by
$$ \Phi(a)=ac^2, \qquad \Phi(c)=c^2, \qquad \Phi(t)=t.$$
For the first relation in \eqref{eq:G2}, we have
\begin{align*}
 \Phi(t^{-1}at)
 &=t^{-1}(ac^2)t \\
 &=(t^{-1}at)(t^{-1}c^2t) \\
 &=(ac)c^3 \\
 &=\Phi(ac),
\end{align*}
and for the second relation,
$$\Phi(t^{-1}c^2t) =t^{-1}c^4t =(t^{-1}c^2t)^2 =c^6 =\Phi(c^3). $$
Therefore $\Phi$ induces an endomorphism of $G_2$. Note that the image $\phi(G_2)$ contains $t$, $c^2$, and $ac^2$. It therefore contains
$$ a=(ac^2)c^{-2}=\Phi(ac^{-1}), \quad c^3=t^{-1}c^2t=\Phi(t^{-1}ct), \quad c=c^3(c^2)^{-1}=\Phi(t^{-1}ctc^{-1}).$$
Thus $\Phi: G_2\to G_2$ and $\Phi |_{B_2}: B_2\to B_2$ are both surjective, where $$B_2:=\langle c,t\mid t^{-1}c^2t=c^3\rangle\cong \BS(2,3),$$ by Proposition~\ref{prop:retract}.

We now consider
$$ w:=[t^{-1}ct,c]=t^{-1}c^{-1}tc^{-1}t^{-1}ctc\in B_2.$$
Note that the three possible intervening powers of $c$ have exponents $-1,-1,1$. None gives a pinch of the form $t^{-1}c^{2k}t$ or $tc^{3k}t^{-1}$. By Lemma~\ref{lem:britton}, $w\neq 1$ in $B_2$, and therefore also in $G_2$. On the other hand,
$$ \Phi(w)=[\Phi(t^{-1}ct),\Phi(c)]=[t^{-1}c^2t,c^2] =[c^3,c^2] =1.$$
Therefore, $\Phi$ is surjective but not injective. It follows that $G_2$ is not Hopfian and hence not residually finite.
\end{proof}

\section{\texorpdfstring{Non-automaticity of $G_1$}{Proof of the non-automaticity of $G_1$}}\label{sect. G1}

Let $\mathcal P=\langle X\mid R\rangle $ be a finite presentation, and let $F(X)$ be the free group generated by $X$. For a word $w\in F(X)$ representing the identity in $\mathcal P$, define 
$$ \Area_{\mathcal P}(w) := \min\left\{ N \mid w=\prod_{i=1}^{N}u_i r_i^{\varepsilon_i}u_i^{-1}\text{ in }F(X) \right\},$$
where $u_i\in F(X)$, $r_i\in R$, and $\varepsilon_i\in\{\pm1\}$. Equivalently, $\Area_{\mathcal P}(w)$ is the least number of 2-cells in a van Kampen diagram for $w$ over $\mathcal P$. Furthermore, we have the definition of \emph{Dehn function} $\delta:\mathbb N\to\mathbb N$:
$$\delta_{\mathcal P}(n):=\max\left\{\Area_{\mathcal P}(w) \mid |w|\leq n,~ w\in F(X)\mbox{ represents the identity in} ~\mathcal P \right\},$$
where $|w|$ denotes the word length of $w$ in $F(X)$, i.e., the number of letters in the freely reduced word representing $w$.

We shall use the following standard consequence of automaticity, see \cite[Theorem 2.3.12]{Ep92}.

\begin{thm}[\cite{Ep92}, Theorem 2.3.12]\label{thm:automatic-quadratic}
If a group $H$ is automatic, then every finite presentation
$\mathcal P$ of $H$ satisfies an inequality
$$ \Area_{\mathcal P}(w)\leq C|w|^2+C,\qquad \delta_{\mathcal P}(n)\leq Cn^2+C $$
for all null-homotopic words $w$ and $n\geq 1$, with a constant $C$ depending only on $\mathcal P$.
\end{thm}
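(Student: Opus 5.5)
The plan is the standard fellow-traveller argument. First I reduce to a convenient generating set. Automaticity does not depend on the choice of finite generating set, so I may take an automatic structure $(A,W,M_x)$ with $A$ the generators of $\mathcal P$ together with their inverses. If $\mathcal P$ uses a different generating set, one rewrites words letter by letter. Replacing one finite presentation by another changes areas only by a linear distortion of lengths plus a multiplicative constant. This preserves the form $C|w|^2+C$, with a new $C$ that depends only on $\mathcal P$.

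The two key lemmas about automatic structures come next, and both are proved by pumping arguments on finite state automata.

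\emph{Fellow traveller property.} There is a constant $k$ with the following property. If $(w_1,w_2)\in L(M_x)$ with $x\in A\cup\{\epsilon\}$, then $d(\overline{w_1(j)},\overline{w_2(j)})\le k$ for all $j$, where $w(j)$ denotes the prefix of length $j$ (or the whole word if $j$ is larger). The reason is as follows. At time $j$ the padded pair $(w_1,w_2)$ is in some accepting-reachable state of $M_x$. From that state an accepted suffix of length at most the number of states exists. Hence $\overline{w_1(j)}^{-1}\overline{w_2(j)}$ is a group element of length bounded by twice the number of states plus one.

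\emph{Length bound.} There is a constant $N$ with the following property. For every $g\in H$ there is $w\in L(W)$ with $\pi(w)=g$ and $|w|\le N|g|_A+N$. I prove this by induction on $|g|$, using the multiplier automata. Suppose $(u,v)\in L(M_x)$ and $v$ is much longer than $u$. The padding portion of $v$ then contains a repeated state, so it can be shortened while staying accepted and representing the same element. Therefore one may always choose $|v|\le |u|+N$.

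Now let $w=x_1\cdots x_n$ be null-homotopic. For $0\le i\le n$, choose $w_i\in L(W)$ representing $\overline{x_1\cdots x_i}$ as in the length bound. Then $|w_i|\le Ni+N\le Nn+N$, and I take $w_n=w_0$, a fixed word for the identity. For each $i$, the pair $(w_{i-1},w_i)$ lies in $L(M_{x_i})$, so these two words $k$-fellow travel. For each $j$, join $\overline{w_{i-1}(j)}$ to $\overline{w_i(j)}$ by a geodesic word of length at most $k$. This cuts the loop $w_{i-1}x_iw_i^{-1}$ into at most $\max(|w_{i-1}|,|w_i|)+1$ loops, each of length at most $2k+2$. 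Only finitely many words of length at most $2k+2$ are null-homotopic, so each such loop has $\mathcal P$-area at most some constant $D$. Gluing these ladders along the $w_i$ gives a van Kampen diagram for $w\,w_0w_0^{-1}$, which freely equals $w$. Its area is at most
\[
\sum_{i=1}^n D\,(Nn+N+1)\le C n^2+C .
\]
Taking the maximum over $|w|\le n$ gives the bound on $\delta_{\mathcal P}$.

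I expect the main obstacle to be the length-bound lemma. The fellow traveller estimate is routine. The length bound, however, requires care with padding symbols in the two-tape automata, so that shortening one tape keeps the pair accepted and keeps the represented element unchanged. If this becomes awkward, a fallback is available, since the strips have bounded width. The fellow traveller property alone gives $\big||w_{i-1}|-|w_i|\big|$ bounded after passing to a language with uniqueness of representatives in which accepted words are shortest among accepted representatives. Either route yields linear length growth in $i$.
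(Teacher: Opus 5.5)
Your argument is correct and is essentially the standard proof of \cite[Theorem~2.3.12]{Ep92}: the multiplier automata give fellow travelling, pumping out repeated states in the padded tail gives a linear length bound, and each ladder $w_{i-1}x_iw_i^{-1}$ is filled with at most $\max(|w_{i-1}|,|w_i|)+1$ loops of length at most $2k+2$; the paper itself does not reprove this result but simply cites it. Two small points: the glued diagram has boundary $w_0ww_0^{-1}$ (a conjugate of $w$, with the same area) rather than $w\,w_0w_0^{-1}$, and your fallback is unnecessary, since any bound on $\bigl||w_{i-1}|-|w_i|\bigr|$ comes from the same pumping argument, not from fellow travelling alone.
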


The isoperimetric inequalities and the Dehn functions for Baumslag-Solitar groups indeed have been extensive studied, for example, \cite{Ep92,Ge89,Gr91}. The following theorem is due to S. M. Gersten \cite[Theorem B]{Ge89}. 

\begin{thm}[\cite{Ge89}, Theorem B]\label{thm: Gersten}
    If $X$ is the $2$-complex canonically associated to the Baumslag-Solitar presentation $\langle x, y \mid y^{-1}x^ky = x^\ell\rangle$ where $|k|\neq |\ell|$, then the Dehn function $\delta_X(n)$ grows faster than any polynomial function of $n$; that is, there are no constants $A,d > 0$ with $\delta_X(n) \leq An^d$ for all $n\geq 1$.
\end{thm}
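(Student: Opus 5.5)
Since this is Gersten's Theorem B, quoted from \cite{Ge89}, the paper uses it as a black box. If I were to prove it, I would find an explicit family of null-homotopic words $w_n$ with length $|w_n|$ growing at most polynomially (ideally linearly) in $n$ and area at least $\lambda^n$ for some $\lambda>1$. A bound $\delta_X(n)\le An^d$ would then fail for large $n$.

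I would first normalize. By inverting $y$ (which swaps $k$ and $\ell$) and inverting $x$, I may assume $0<k<\ell$. I would then take the basic short words $u_n:=y^{-n}x^{k^n}y^{n}$, which represent $x^{\ell^n}$ by iterating the relation, and the commutator words $w_n:=[u_n,x]$. If $k=1$, then $|w_n|=4n+4$ is linear and we are done once the area bound below is in hand.

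The lower bound on area comes from the $y$-corridor argument of Lemma~\ref{lem:britton}, made quantitative. In a reduced van Kampen diagram for $w_n$ there are no $y$-annuli, so every $y$-corridor joins two boundary $y$-edges. The $y$-letters of $w_n$ occur as four blocks of $n$. Planarity forces the corridors to pair the blocks in a nested fashion, and the pairing across the commutator is forced to match $u_n$ with $u_n^{-1}$. Suppose it paired $y$-letters inside a single $u_n$ instead. That would create a pinch $y^{-1}x^{j}y$ with $k\nmid j$, or the innermost corridor would bound a subword $x^{k^n}$ that is not a multiple of $\ell$, contradicting Lemma~\ref{lem:britton}. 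So there are $n$ nested corridors, and the $j$-th one (counting from outside) has a side labelled $x^{\pm k^{n-j}\ell^{j}}$. A corridor whose side is $x^{k^{n-j}\ell^{j}}$ contains $k^{n-j}\ell^{j-1}$ cells. Summing over $j$ gives $\Area(w_n)\ge \ell^{n-1}$, which is exponential in $|w_n|$ when $k=1$.

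The main obstacle is the case $1<k<\ell$. There $|u_n|$ already grows like $k^n$, so the estimate above only gives a polynomial gap, namely area comparable to $|w_n|^{\log\ell/\log k}$. To get exponential growth I would replace the single stack $u_n$ with words that reuse short pieces. My first attempt would be the recursive words $v_0=x$ and $v_{j+1}=y^{-1}v_j^{\,k}y\,v_j^{\,?}$, chosen so that $v_j$ represents a power of $x$ whose exponent grows like $(\ell/k)^{j}$ times a controlled factor. The aim is to have $|v_j|$ grow by a bounded factor less than the growth factor of the exponent. Failing that, I would follow Gersten's approach: lift to the universal cover, use the Bass--Serre tree to locate the $y$-corridors, and bound the area from below by the number of $x$-edges along the "upper" sides of nested corridors. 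This yields an exponential lower bound in terms of the height of the corridor tree, while the boundary length grows only linearly in that height. I expect this combinatorial choice of words for $k>1$ to be the genuinely delicate step. The corridor counting itself is routine.
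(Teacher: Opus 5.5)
\textbf{There is a genuine gap: the case $1<|k|<|\ell|$ is not proved, and the plan you sketch for it cannot work as stated.} Suppose your words have length growing by a factor $\mu>1$ per step while the exponent grows by a factor $\lambda>\mu$. Then you only get $\Area(w)\gtrsim |w|^{\log\lambda/\log\mu}$. That is a polynomial lower bound of one fixed degree, and it is perfectly compatible with $\delta_X(n)\le An^d$ for $d\ge \log\lambda/\log\mu$. Your recursion $v_{j+1}=y^{-1}v_j^{k}y\,v_j^{m}$ is exactly of this type, with exponent factor $\ell+m$ and length factor $k+|m|$. So it is no better than $u_n$, and your stated aim is the wrong target.

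What you need is length growing \emph{additively} in the depth. The missing idea is the exponential distortion of $\langle x\rangle$: never raise a previous word to a power, but restore divisibility by $k$ with a bounded power of $x$ before each conjugation. Put $E_0=k$ and $E_i=\ell E_{i-1}/k+s_i$, where $s_i\in(-k,k)$ satisfies $s_i\ell E_{i-1}\ge 0$ and $k\mid E_i$. Then $|E_i|\ge k(|\ell|/k)^i$, and
\[
W_n=y^{-n}x^{k}\,y\,x^{s_1}\,y\,x^{s_2}\cdots y\,x^{s_{n-1}}\,y
\]
represents $x^{\ell E_{n-1}/k}$ with $|W_n|\le (k+1)n+1$. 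For $w_n=[W_n,x]$, the corridor analysis (corrected as below) shows that the corridors nest inside $W_n$ and inside $W_n^{-1}$. The $i$-th corridor from the inside has $|E_{i-1}|/k$ cells, so $\Area(w_n)\ge(|\ell|/k)^{n-1}$ while $|w_n|=O(n)$. Your appeal to Gersten's Bass--Serre tree approach is not an argument by itself; it hinges on exactly this construction.

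\textbf{The pairing step is inverted, even for $k=1$.} A corridor cutting off $y^{-1}x^{j}y$ needs $k\mid j$ (Lemma~\ref{lem:britton}). So pairing inside $u_n$ is perfectly legal: its innermost subword $y^{-1}x^{k^n}y$ is a pinch. What is impossible is a corridor joining $u_n^{-1}$ to $u_n$. Such a chord would force, by non-crossing applied to the arc through the junction, a corridor cutting off $yx^{\pm1}y^{-1}$, and that requires $\ell\mid 1$. Hence the corridors nest inside each of $u_n$ and $u_n^{-1}$. The side labels $x^{k^{n-j}\ell^{j}}$ you wrote down are precisely those of this nesting, with $j$ counted from the inside. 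So your bound $\ell^{n-1}$ survives once the justification is reversed.

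Also, inverting $x$ changes the signs of $k$ and $\ell$ simultaneously, so you may only assume $0<k<|\ell|$, not $\ell>0$. Replacing $\ell$ by $|\ell|$ in the counts handles this. With these corrections your argument covers $(k,\ell)=(1,2)$, the only case the paper actually uses (the paper simply cites Gersten). It does not cover the theorem as stated.
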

Note that the original statement of Theorem \ref{thm: Gersten} is about a function $\lambda_X(n)$, which is not greater than the Dehn function $\delta_X(n)$. We also need the following result \cite[Corollary 2]{BMS93} due to Baumslag, Miller III and Short.
\begin{thm}[\cite{BMS93}, Corollary 2]\label{thm: Dehn function of retract}
If the finitely presented group $B$ with finite presentation $\mathcal{B}$ is a retract of the finitely presented group $A$ with finite presentation $\mathcal{A}$, then $\delta_\mathcal B(n)\preceq \delta_\mathcal A(n)$, that is, $\delta_\mathcal B(n)\leq C\delta_\mathcal{A}(Cn+C)+Cn+C$ for some constant $C$.
\end{thm}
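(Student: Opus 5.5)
The plan is to push a null-homotopic word of $B$ into $A$ along the inclusion, fill it there, and pull the filling back along the retraction. The constants come only from finitely many auxiliary words and their areas.

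\textbf{Setup.} Write $\mathcal A=\langle X\mid R\rangle$ and $\mathcal B=\langle Y\mid S\rangle$. Let $\iota:B\to A$ be the inclusion and $\rho:A\to B$ a retraction, so $\rho\circ\iota=\id_B$. For each $y\in Y$ fix a word $\hat\iota(y)\in F(X)$ representing $\iota(y)$. For each $x\in X$ fix a word $\hat\rho(x)\in F(Y)$ representing $\rho(x)$. These extend to homomorphisms of free groups $\hat\iota:F(Y)\to F(X)$ and $\hat\rho:F(X)\to F(Y)$. Put $L=\max_y|\hat\iota(y)|$.

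\textbf{Step 1: fill the pushed-forward word in $A$.} Let $w\in F(Y)$ with $|w|\le n$ represent $1$ in $B$. Then $\hat\iota(w)$ has length at most $Ln$ and represents $1$ in $A$. Hence in $F(X)$
$$\hat\iota(w)=\prod_{i=1}^{N}u_i r_i^{\varepsilon_i}u_i^{-1},\qquad N\le\delta_{\mathcal A}(Ln).$$

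\textbf{Step 2: apply $\hat\rho$.} Applying $\hat\rho$ gives, in $F(Y)$,
$$\hat\rho\hat\iota(w)=\prod_{i=1}^{N}\hat\rho(u_i)\,\hat\rho(r_i)^{\varepsilon_i}\,\hat\rho(u_i)^{-1}.$$
Each $\hat\rho(r)$ with $r\in R$ represents $1$ in $B$, because $\rho$ is a homomorphism. Since $R$ is finite, $K_1:=\max_{r\in R}\Area_{\mathcal B}(\hat\rho(r))$ is finite. Area is subadditive under products of conjugates, so
$$\Area_{\mathcal B}(\hat\rho\hat\iota(w))\le K_1\,\delta_{\mathcal A}(Ln).$$

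\textbf{Step 3: compare $w$ with $\hat\rho\hat\iota(w)$.} This is where $\rho\circ\iota=\id_B$ enters. For each $y\in Y$ the word $\hat\rho\hat\iota(y)\,y^{-1}$ represents $1$ in $B$. Let $K_2$ be the maximum of their areas, again finite because $Y$ is finite.

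Write $w=y_1^{\eta_1}\cdots y_k^{\eta_k}$ with $k\le n$. Telescoping one letter at a time expresses $w\,(\hat\rho\hat\iota(w))^{-1}$ freely as a product of $k$ conjugates of the words $\bigl(\hat\rho\hat\iota(y_j)\,y_j^{-1}\bigr)^{\pm1}$. Its area is therefore at most $K_2 n$.

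\textbf{Conclusion.} Combining Steps 2 and 3,
$$\Area_{\mathcal B}(w)\le K_1\delta_{\mathcal A}(Ln)+K_2 n.$$
Taking $C\ge\max\{K_1,L,K_2\}$ gives $\delta_{\mathcal B}(n)\le C\delta_{\mathcal A}(Cn+C)+Cn+C$, using that $\delta_{\mathcal A}$ is nondecreasing.

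The main point needing care is Step 3. Without the retraction identity, $\hat\rho\hat\iota(w)$ would be an unrelated word and the bound would not transfer back to $w$. Everything else reduces to finiteness of $X$, $Y$ and $R$, together with the fact that the conjugators $\hat\rho(u_i)$ do not affect area.
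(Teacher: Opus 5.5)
The paper does not prove this statement; it quotes it from \cite{BMS93}, so there is no in-paper argument to compare yours against. Your proof is correct and self-contained, and it is the standard argument.

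You push $w$ forward by $\hat\iota$ and fill it in $\mathcal A$. You then map the filling back by $\hat\rho$, at cost at most $K_1$ per $\mathcal A$-cell. Finally you pay a linear correction $K_2 n$ for replacing each letter $y$ by $\hat\rho\hat\iota(y)$, which is exactly where $\rho\circ\iota=\id_B$ is needed.

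The telescoping step is sound. Writing $z_j=y_j^{\eta_j}$, $z_j'=\hat\rho\hat\iota(y_j)^{\eta_j}$ and $P_j=z_1\cdots z_j$, one has $P_k(z_1'\cdots z_k')^{-1}=\prod_{j=k}^{1}P_{j-1}(z_jz_j'^{-1})P_{j-1}^{-1}$. Each $z_jz_j'^{-1}$ is a conjugate of $\bigl(\hat\rho\hat\iota(y_j)y_j^{-1}\bigr)^{\pm1}$, as you say.

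Your argument gives the slightly sharper bound $\delta_{\mathcal B}(n)\le K_1\delta_{\mathcal A}(Ln)+K_2 n$. To reach the stated form, take $C$ to be a positive integer, so that $\delta_{\mathcal A}(Cn+C)$ is defined on $\mathbb N$. Then use $Ln\le Cn+C$ together with monotonicity of $\delta_{\mathcal A}$.
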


We now prove the non-automaticity of $G_1$. Note that $G_1$ has
a presentation
\begin{equation}\label{eq:P1}
 \mathcal P_1:= \left\langle a,c,t\ \middle|\ t^{-1}at=ac,\quad t^{-1}ct=c^2 \right\rangle .
\end{equation}
Its Baumslag-Solitar group retract $\BS(1,2)$ has the following presentation 
$$\mathcal{B}_1:=\langle c,t\mid t^{-1}ct=c^2\rangle.$$
Hence, we can easily obtain the following proposition.

\begin{prop}\label{prop:nonautomatic}
The group $G_1$ is not automatic.
\end{prop}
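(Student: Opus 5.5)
We argue by contradiction, combining the three results quoted just above: Theorem~\ref{thm:automatic-quadratic}, Theorem~\ref{thm: Gersten}, and Theorem~\ref{thm: Dehn function of retract}. Suppose $G_1$ is automatic.

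\textbf{Step 1.} Apply Theorem~\ref{thm:automatic-quadratic} to the finite presentation $\mathcal P_1$ of $G_1$ given in \eqref{eq:P1}. This yields a constant $C_1$ with
$$\delta_{\mathcal P_1}(n)\le C_1n^2+C_1 \quad\text{for all } n\ge 1.$$

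\textbf{Step 2.} By Proposition~\ref{prop:retract} with $m=1$, the subgroup $\langle c,t\rangle\cong \BS(1,2)$ is a retract of $G_1$, and $\mathcal B_1$ is a finite presentation of it. Theorem~\ref{thm: Dehn function of retract} therefore gives a constant $C$ with
$$\delta_{\mathcal B_1}(n)\le C\,\delta_{\mathcal P_1}(Cn+C)+Cn+C.$$
Substituting the bound from Step~1, the right-hand side is a polynomial of degree $2$ in $n$. Since $n\ge 1$, the constant terms can be absorbed, so there is some $A>0$ with
$$\delta_{\mathcal B_1}(n)\le An^2 \quad\text{for all } n\ge 1.$$

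\textbf{Step 3.} The presentation $\mathcal B_1$ is exactly the canonical Baumslag--Solitar presentation $\langle x,y\mid y^{-1}x^ky=x^\ell\rangle$ with $x=c$, $y=t$, $k=1$, $\ell=2$. Since $|k|\neq|\ell|$, Theorem~\ref{thm: Gersten} says that $\delta_{\mathcal B_1}$ admits no polynomial bound $An^d$. Taking $d=2$ contradicts Step~2, so $G_1$ is not automatic.

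None of these steps is a real obstacle; the argument is essentially bookkeeping. The points needing care are the following.
\begin{itemize}
\item The Dehn function in Theorem~\ref{thm: Gersten} is that of the canonical $2$-complex of the presentation. By Van Kampen's lemma this coincides with $\delta_{\mathcal B_1}$ as defined via the Area function in this section, and Gersten's $\lambda_X$ is bounded above by it, as remarked after Theorem~\ref{thm: Gersten}.
\item Theorem~\ref{thm: Dehn function of retract} requires both groups to be finitely presented. This holds here, since $\mathcal P_1$ and $\mathcal B_1$ are finite presentations.
\end{itemize}
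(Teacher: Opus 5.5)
Your proposal is correct and follows the paper's proof essentially verbatim. You assume automaticity, obtain a quadratic bound for $\delta_{\mathcal P_1}$ from Theorem~\ref{thm:automatic-quadratic}, transfer it to $\delta_{\mathcal B_1}$ through the $\BS(1,2)$ retract (Proposition~\ref{prop:retract} and Theorem~\ref{thm: Dehn function of retract}), and contradict Theorem~\ref{thm: Gersten}; your explicit absorption of constants into $An^2$ and your remarks on finite presentability and $\lambda_X\le\delta_X$ are sound.
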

\begin{proof}
Assume $G_1$ is automatic. By Theorem \ref{thm:automatic-quadratic}, there exists a real number $C>0$ such that $\delta_{\mathcal{P}_1}(n)\leq Cn^2+C$ for any $n\geq 1$. By Proposition \ref{prop:retract} and Theorem \ref{thm: Dehn function of retract}, we have $\delta_{\mathcal{B}_1}(n)\preceq \delta_{\mathcal{P}_1}(n)$. Then, there exists another real number $D>0$ such that $$\delta_{\mathcal B_1}(n)\leq D\delta_{\mathcal{P}_1}(Dn+D)+Dn+D\leq DC(Dn+D)^2+DC+Dn+D,$$
which contradicts Theorem \ref{thm: Gersten}. Therefore, $G_1$ is not automatic.
\end{proof}

\section{Proof of the main conclusions and further questions}\label{sect. proof and question}

\begin{proof}[\textbf{Proof of Theorem~\ref{thm:main}}]
Proposition~\ref{prop:retract} proves the retract assertion. Propositions~\ref{prop:nonhopfian} and \ref{prop:nonautomatic} prove, respectively, that $G_2$ is not Hopfian and not residually finite, and that $G_1$ is not automatic.
\end{proof}

Finally, we propose the following question based on Baumslag's question.
\begin{ques}\label{ques. without BS subgp}
    Let $G=F/\ncl{r}$ be a one-relator group with the relator $r=[u,v], ~(u,v\in F)$ and without Baumslag-Solitar subgroups, where $F$ is a finitely generated free group.
    \begin{enumerate}
        \item Is $G$ Hopfian?
        \item Is $G$ residually finite?
        \item Is $G$ automatic?
    \end{enumerate}
\end{ques}

Since all hyperbolic groups are Hopfian \cite{Se99, RW19, FS23} and automatic \cite{Ep92}, items (1) and (3) of Question \ref{ques. without BS subgp} naturally hold if the following conjecture holds. 

\begin{conj}[\cite{Ge92, AG99}, Gersten]\label{conj. Gersten}
    Is every one-relator group without Baumslag-Solitar subgroups hyperbolic?
\end{conj}

\vspace{0.4cm}
\noindent\textbf{AI disclosure.} ChatGPT assisted in constructing the examples. The authors have checked all the arguments, reviewed and edited the content as needed, and take full responsibility for the content of the paper.


\begin{thebibliography}{dd}
\bibitem{AG99} D. Allcock and S.~M. Gersten, \emph{A homological characterization of hyperbolic groups}, Invent. Math. {\bf 135} (1999), no.~3, 723--742.

\bibitem{BMS93} G. Baumslag, C.~F. Miller III and H. Short, \emph{Isoperimetric inequalities and the homology of groups}, Invent. Math. {\bf 113} (1993), no.~3, 531--560.

\bibitem{BMS98} G. Baumslag, A.~G. Myasnikov and V. Shpilrain, ``Open problems in combinatorial group theory'', in {\it Groups, languages and geometry (South Hadley, MA, 1998)}, 1--27, Contemp. Math., 250, Amer. Math. Soc., Providence, RI.


\bibitem{Br63} J.~L. Britton, \emph{The word problem}, Ann. of Math. (2) {\bf 77} (1963), 16--32.

\bibitem{Ep92} D.~B.~A. Epstein et al., \emph{Word processing in groups}, Jones and Bartlett, Boston, MA, 1992.

\bibitem{FS23} K. Fujiwara and Z. Sela, \emph{The rates of growth in a hyperbolic group}, Invent. Math., {\bf 233} (3)(2023), 1427--1470.

\bibitem{Ge89} S.~M. Gersten, ``Dehn functions and $l_1$-norms of finite presentations'', in {\it Algorithms and classification in combinatorial group theory (Berkeley, CA, 1989)}, 195--224, Math. Sci. Res. Inst. Publ., 23, Springer, New York.

\bibitem{Ge92} S.~M. Gersten, ``Problems on automatic groups'', in \emph{Algorithms and classification in combinatorial group theory (Berkeley, CA, 1989)}, 225--232, Math. Sci. Res. Inst. Publ., 23, Springer, New York.

\bibitem{Gr91} M. Gromov, ``Asymptotic invariants of infinite groups'', in {\it Geometric group theory, Vol.\ 2 (Sussex, 1991)}, 1--295, London Math. Soc. Lecture Note Ser., 182, Cambridge Univ. Press, Cambridge.

\bibitem{JL25} A. Jaikin-Zapirain and M. Linton, \emph{On the coherence of one-relator groups and their group algebras}, Ann. of Math. (2) {\bf 201} (2025), no.~3, 909--959.

\bibitem{LW20} L. Louder and H. Wilton, \emph{One-relator groups with torsion are coherent}, Math. Res. Lett. {\bf 27} (2020), no.~5, 1499--1511.

\bibitem{LW24} L. Louder and H. Wilton, \emph{Uniform negative immersions and the coherence of one-relator groups}, Invent. Math. {\bf 236} (2024), no.~2, 673--712.

\bibitem{LS77} R.~C. Lyndon and P.~E. Schupp, \emph{Combinatorial group theory}, Ergebnisse der Mathematik und ihrer Grenzgebiete, Band 89, Springer, Berlin-New York, 1977.

\bibitem{Ma30} W. Magnus, \emph{\"Uber diskontinuierliche Gruppen mit einer definierenden Relation. (Der Freiheitssatz)}, J. Reine Angew. Math. {\bf 163} (1930), 141--165.

\bibitem{Ma32} W. Magnus, \emph{Das Identit\"atsproblem f\"ur Gruppen mit einer definierenden Relation}, Math. Ann. {\bf 106} (1932), no.~1, 295--307.

\bibitem{Ma40} A.~I. Mal'cev, \emph{On isomorphic matrix representations of infinite groups}, Rec. Math. [Mat. Sbornik] N.S. {\bf 8/50} (1940), 405--422.

\bibitem{Ne68} B.~B. Newman, \emph{Some results on one-relator groups}, Bull. Amer. Math. Soc. {\bf 74} (1968), 568--571.

\bibitem{Ol95} A.~Y. Olshanskii, \emph{$\mathrm{SQ}$-universality of hyperbolic groups}, Sb. Math. {\bf 186} (1995), no.~8, 1199--1211; translated from Mat. Sb. {\bf 186} (1995), no.~8, 119--132.

\bibitem{RW19} R. Weidmann and C. Reinfeldt, \emph{Makanin-Razborov diagrams for hyperbolic groups}, Ann. Math. Blaise Pascal {\bf 26} (2019), no.~2, 119--208.

\bibitem{Se99} Z. Sela, \emph{Endomorphisms of hyperbolic groups. I. The Hopf property}, Topology {\bf 38} (1999), no.~2, 301--321.

\bibitem{Wi21} D.~T. Wise, \emph{The structure of groups with a quasiconvex hierarchy}, Ann. of Math. Stud., 209, Princeton Univ. Press, Princeton, NJ, 2021, x+357 pp.
\end{thebibliography}
\end{document}